\documentclass[final,letterpaper]{article}
\usepackage{times}
\usepackage{amsthm}
\usepackage{amsmath}
\usepackage{amssymb}
\usepackage{xspace}
\usepackage{xcolor}
\usepackage{bm}

\newtheorem{theorem}{Theorem}
\newtheorem{proposition}[theorem]{Proposition}

\theoremstyle{definition}

\newtheorem{remark}[theorem]{Remark}
\newtheorem{example}[theorem]{Example}
\newtheorem{algorithm}[theorem]{Algorithm}

\title{A class of regularization schemes for linear ill-posed problems in Banach spaces under low order source conditions}

\author{Robert Plato\footnotemark[1]}

\newcommand{\para}{\alpha}

\newcommand{\pardel}{\para_\delta}

\newcommand{\ix}{\mathcal{X}}

\newcommand{\pp}{p}
\newcommand{\ppz}{\pp_0}
\newcommand{\ppo}{\pp_1}
\newcommand{\qq}{q}

\newcommand{\fdelta}{f^\delta}
\newcommand{\reza}{ \mathbb{R} }
\newcommand{\naza}{ \mathbb{N} }
\newcommand{\Landau}{\mathcal{O}}
\newcommand{\refeq}[1]{(\ref{eq:#1})}

\newcommand{\norm}[1]{\Vert \hspace{0.4mm} #1 \hspace{0.4mm} \Vert}

\newenvironment{myenumerate}{%
\begin{list}{(\alph{enumcount})}
{\setcounter{enumcount}{1}\usecounter{enumcount}
\setlength{\topsep}{1mm}
\setlength{\itemsep}{0mm}
\setlength{\labelwidth}{0mm}
\setlength{\labelsep}{1mm}
\setlength{\itemindent}{1mm}
\setlength{\leftmargin}{0mm}
}}{\end{list}}

\newcommand{\myu}{u}

\newcommand{\ust}{u^\dagger}

\newcommand{\fst}{f}
\newcommand{\fdel}{f^\delta}

\newcommand{\upardeldel}{u_{\pardel}^\delta}

\newcommand{\tinybullet}{{\tiny \raisebox{0.6mm}{$ \bullet $}}}

\newcommand{\cdott}{\hspace{0.4mm}}

\newcommand{\kla}[1]{(#1)}
\newcommand{\klaf}[1]{\kla{#1}}
\newcommand{\mfrac}[2]{\dfrac{\mbox{\footnotesize \raisebox{-0.5mm}{$#1$}}}%
{\mbox{\footnotesize \raisebox{0.8mm}{$#2$}}}}
\newcommand{\proofend}{\qquad \endproof}

\newcommand{\prim}[1]{#1^{\prime}}
\newcommand{\Landauno}[1]{\Landau\kla{#1}}

\newcommand{\as}{\quad \text{as} \ \ }

\newcommand{\R}{\mathcal{R}}
\newcommand{\cf}{cf.\mbox{}\xspace}
\newcommand{\ie}{i.e.,\xspace}
\newcommand{\eg}{e.g.,\xspace}
\newcommand{\remarkend}{\quad \ensuremath{\vartriangle}}

\newcommand{\defeq}{:=}
\newcommand{\for}{\quad \text{for} \ \ }
\newcommand{\forsh}{\ \ \  \text{for} \ }

\newcommand{\ints}[4]{\mathop{\raisebox{-0.1mm}{\mbox{\Large$ \textstyle \int $}}}\nolimits_{\hspace{-1mm}#1}^{#2} \hspace{-0.2mm} #3 \, #4}

\newcommand{\rhs}{right-hand side\xspace}

\newenvironment{myenumerate_indent}{%
\begin{list}{(\alph{enumcount})}
{\setcounter{enumcount}{1}\usecounter{enumcount}
\setlength{\topsep}{1mm}
\setlength{\itemsep}{0mm}
\setlength{\labelwidth}{5mm}
\setlength{\labelsep}{2mm}
\setlength{\itemindent}{-0mm}
\setlength{\leftmargin}{7mm}
}}{\end{list}}

\newcommand{\inset}[1]{\{ \, #1 \, \}}

\newcommand{\modul}[1]{\vert \hspace{0.3mm} #1 \hspace{0.3mm} \vert}

\newcommand{\normix}[1]{\norm{#1}}

\newcommand{\lfrac}[2]{#1\slash #2}

\newcommand{\upardel}[1][\para]{{u}_{#1}^\delta}
\newcommand{\rpardel}[1][\para]{{r}_{#1}^\delta}

\newcommand{\updd}{\upardel[\pardel]}

\newcommand{\udel}{\updd}

\newcommand{\ubar}{\overline{u}}
\newcommand{\udelta}{\myu^\delta}

\newcommand{\cp}[1][p]{e_{#1}}
\newcommand{\czer}{\cp[0]}
\newcommand{\cst}{e_*}
\newcommand{\fracpower}{fractional power\xspace}
\newcommand{\fracpowers}{\fracpower{}s\xspace}
\newcommand{\domain}{\mathcal{D}}

\newcommand{\kapst}{\kappa_*}
\newcommand{\RA}{\overline{\R(\A)}}

\newcommand{\postype}{positive type\xspace}
\newcommand{\Postype}{Positive type\xspace}
\newcommand{\ofpostype}{of \postype{}\xspace}

\newcommand{\logpowinv}[1]{\kla{\log #1}^{-\mynu}}
\newcommand{\logpow}[2]{\kla{\log #2}^{{#1}}}
\newcommand{\loga}{\log \A}
\newcommand{\logj}{\log J}
\newcommand{\mylog}{\log}
\newcommand{\A}{A}
\newcommand{\logpowa}[1][\mynu]{(\log \A)^{#1}}
\newcommand{\mynu}{\nu}

\newcommand{\lavmetpur}{Lavrentiev method\xspace}
\newcommand{\lavmet}{the \lavmetpur{}\xspace}
\newcommand{\lavmetit}{the $m$-times iterated \lavmetpur{}\xspace}

\newcommand{\mychi}[2]{\chi_{#1,#2}}
\newcommand{\mychiinv}[2]{\chi^{-1}_{#1,#2}}
\newcommand{\mylogb}[1]{\log \tfrac{1}{#1}}
\newcommand{\ccdef}[1]{\refstepcounter{ccount}\label{cc#1}}%
\newcommand{\ccref}[1]{c_{\ref{cc#1}}}%
\newcommand{\ccdefref}[1]{\protect\ccdef{#1}\ensuremath{\ccref{#1}}}%
\newcommand{\cpz}{e_0}
\newcommand{\apsegr}{A^\pp, \, \pp\ge 0,}%
\newcommand{\ii}{\textup{i}}%
\newcommand{\losoc}{logarithmic source condition\xspace}
\newcommand{\myxi}{y}
\newcommand{\cosegr}{$C_0$-semigroup\xspace}
\newcommand{\cosegrs}{\cosegr{}s\xspace}
\newcommand{\co}{C_0[0,1]}
\newcommand{\mya}{a}
\newcommand{\mywlog}{w.l.o.g.\ }
\newcommand{\semidefinite}{semi-definite\xspace}
\newcommand{\los}{low order smoothness\xspace}
\newcommand{\dw}{d_w}
\newcommand{\regscheme}{scheme of the form \refeq{upardel-def}, with a regularization operator \refeq{rbeta} that satisfies the conditions \refeq{wachstum}--\refeq{commute}\xspace}
\newcommand{\regschemeb}{regularization method of the form \refeq{upardel-def}, with a regularization operator \refeq{rbeta} that obeys the conditions \refeq{wachstum}--\refeq{commute}\xspace}
\newcommand{\regschemec}{scheme of the form \refeq{upardel-def} which satisfies \refeq{wachstum}--\refeq{commute} and \refeq{H_t-stetig}\xspace}
\newcommand{\myalpha}{p}
\newcommand{\fracb}[2]{#1/(#2)}%

\begin{document}
\date{}
\maketitle


\renewcommand{\thefootnote}{\fnsymbol{footnote}}
\footnotetext[1]{Department of Mathematics, University of Siegen,
Walter-Flex-Str.~3, 57068 Siegen, Germany}

\newcounter{enumcount}
\newcounter{ccount}
\renewcommand{\theccount}{\arabic{ccount}}
\setcounter{ccount}{-1}
\newcounter{enumcountroman}
\renewcommand{\theenumcount}{(\alph{enumcount})}
\bibliographystyle{plain}
%
%
\begin{abstract}
In this work we consider, in a Banach space framework, the regularization of linear ill-posed problems. Our focus is on the recovery of solutions that have a logarithmic source representation. Such cases typically occur in exponentially ill-posed problems like the backwards heat equation. The mathematical framework on logarithms of operators is provided, and convergence rates for a class of parametric regularization schemes are deduced. This class includes the iterated version of Lavrentiev's method and the method of the abstract Cauchy problem. The presentation includes both a priori and a posteriori parameter choice strategies. Finally, we present an example for a logarithmic source representable function with respect to the integration operator.
\end{abstract}

\bigskip \noindent
\textbf{Keywords}: 
{\small
linear ill-posed problem, inverse problem,
logarithmic source condition, low order source condition,
exponentially ill-posed problem,
mixed source condition,
regularization, Lavrentiev's method, method of abstract Cauchy problem,
a priori parameter choice strategy, discrepancy principle, 
logarithmic interpolation inequality.}

%


%
\section{Introduction}
\label{intro}
In this paper, we consider linear equations
\begin{align}
\label{eq:maineq}
Au = \fst,
\end{align}
where $ A:\ix \to \ix $ is a bounded linear operator in a Banach space $ \ix $ with a non-closed range $ \R(A) $. Moreover we suppose that
the \rhs $ \fst $ belongs to the range $ \R(A) $
of $ A $ and is given approximately
by $ \fdelta \in \ix $ as follows, \\
$$
\fst \in \R(A),
\quad 
\fdelta \in \ix, \qquad
\norm{ \fst - \fdelta } \le \delta,
$$
where $ \delta \ge 0 $ is a given noise level. The fact that $ A $ has a non-closed range implies that equation \refeq{maineq} is ill-posed, \ie the solution depends discontinuously on the data $ \fst $. We assume here for the moment that the operator $ \A $ is one-to-one, which means that solutions of equations $ \A u = f $ are unique for each $ f \in \R(A) $.
Some examples are considered in Example \ref{th:postype-examples} below.
The main subject of the present paper is the consideration of equations~\refeq{maineq} having solutions with a
logarithmic source-wise representation, \ie $ u \in \domain(\logpowa) $ for some integer $ \nu \ge 1 $. This in fact is a typical situation for exponentially ill-posed problems, e.g., the backwards heat equation. In that case, the operator $ A $ is of the form 
$ A = \exp(-tB) $ with some $ t > 0 $ and some differential operator $ B $. We then have
$ \log A = -tB $ and thus $ \domain(\logpowa) = \domain(B^\mynu)$, so that exactly the elements of $ \domain(\logpowa) $ have certain differentiability properties.

Equations with logarithmic source-wise representable solutions are considered in several works, see \eg 
\cite{Mair[94],Tautenhahn[98],Hohage[00],Mathe_Pereverzev[03],Hofmann_Mathe[07],König_Werner_Hohage[16],Kindermann[21],Klinkhammer_Plato[23],Klinkhammer[24],Plato_Hofmann[22],Hofmann_Klinkhammer_Plato[23]}. The latter two papers include a Banach space setting, but the considerations are restricted to Tikhonov regularization. In the present work, our focus is on other regularization schemes in a Banach space framework, \eg \lavmet. The considerations are restricted to linear problems, however.

Our consideration are not restricted to genuine logarithmic source-wise representations. More generally, we will consider mixed source conditions of the form
$ \myu = A^\pp v $ for some $ v \in \domain(\logpowa) $,
with some $ p \ge 0, \ \nu \in \naza $. Those extentions are obtained without much additional efforts and maybe of independent interest.

The present work can be considered as an extention of the work
\cite{Plato[96]}, where a similar setting is considered for power-type source-wise representable solutions $ \myu \in \R(A^\pp) $ for some $ p > 0 $. For related work
on power-type source-wise representable solutions for linear operators in Banach spaces, we refer to \cite{Nevanlinna[93]}.


The outline of the remainder of the paper is as follows. The following section provides some basic facts on \postype operators and its fractional powers,
which serve as a basic ingredient in the analysis. Moreover, a class of regularization schemes is introduced in that section, which includes \lavmet and its iterated version. Section \ref{log_A} introduces the logarithm of a bounded \postype operator and some of its properties with impact to regularization. In Section~\ref{parameterchoice},
for the considered class of regularization schemes, we present convergence rates 
under mixed source conditions. This is done
both for appropriate a priori- as well as a posteriori parameter choice strategies, where the latter rule serves as a discrepancy principle.
As a by-product, we obtain a logarithmic interpolation inequality which may be of independent interest. 
In Section~\ref{loworder_example}, we present an example of a function that satisfies a \losoc.
 In an appendix, some useful results for a special class of real-valued, logarithm-based functions are provided that can be utilized for the derivation of error estimates under mixed source conditions.
\section{\Postype operators, fractional powers, and regularization operators}
\subsection{\Postype operators}
\label{postype_operators}
Let $ \ix $ be a Banach space and $ \A: \ix \to \ix $ be a bounded linear operator of \postype, i.e.,
\begin{align}
& \A + \para I: \ix \to \ix \ \textup{one-to-one and onto},
\qquad \norm{(\A + \para I)^{-1}} \le \frac{\kapst}{\para}, \quad \para > 0,
\label{eq:postype}
\end{align}
for some finite constant $ \kappa_* > 0 $.
Property \refeq{postype} provides the basic ingredient both for defining fractional powers and the logarithm of the operator $ A $, respectively. It is also needed for
\lavmet and its iterated version.
\begin{example}
\label{th:postype-examples}
\begin{myenumerate}
\item
For every bounded linear operator which is self-adjoint and positive semi-definite on a Hilbert space $ \ix $, condition \refeq{postype} is satisfied with $ \kapst = 1 $.

\item
\label{it:postype-examples-b}
Consider the integration operator 
\begin{align*}
\klaf{Ju}(x) = \ints{0}{x}{ u(\myxi)}{ d\myxi } \qquad (0 \le x \le \mya, \quad u \in \ix).
\end{align*}
It defines a bounded linear operator $ J: \ix \to \ix $ \ofpostype in each of the following cases:
$ \ix = C[0,\mya] := \inset{u: [0,\mya] \to \reza \mid u \textup{ continuous } }, \ 
\ix = C_0[0,\mya] := \inset{u \in C[0,\mya]:u(0) = 0 }$, and
$ \ix=L^r(0,\mya)$ for $ 1 \le r \le \infty $, \cf, \eg \cite{Plato[95],Plato[97.2]}.

\item
Consider the Abel integration operator 
\begin{align}
\klaf{J_\myalpha u}(x) = \mfrac{1}{ \Gamma\kla{\myalpha} }
\ints{0}{x}
{ \mfrac{ u\kla{\myxi} }{ \kla{x-\myxi}^{1-\myalpha}} }
{ d \myxi }
\for 0 \le x \le \mya, \quad u \in \ix,
\label{eq:frac-int=abel}
\end{align}
where $ \myalpha > 0 $, and $ \Gamma $ denotes Euler's gamma function. Moreover,
$ \ix $ denotes one of the spaces considered
in part \ref{it:postype-examples-b} of this example.
%
For each $ 0 < \myalpha < 1 $ and each such space $ \ix $, the Abel integral operator  
$ J_\myalpha: \ix \to \ix $ is of \postype, respectively, \cf \cite{Plato[95],Plato[97.2]}.
\remarkend
\end{myenumerate}
\end{example}
\subsection{Fractional powers}
Fractional powers of a \postype operator $ A: \ix \to \ix $ in a Banach space $ \ix $ may be utilized to introduce an intermediate degree of smoothness of solutions of $ A \myu = f $. In the present paper, they will be also used to introduce the logarithm of the operator $ A $.

Fractional powers of a bounded \postype
operator $ A: \ix \to \ix $  may be defined as follows, \cf
\cite{Balakrishnan[59],Balakrishnan[60]} or \cite[Proposition 3.1.12]{Haase[06]}:
\begin{myenumerate_indent}
\item
For $ 0 < \pp < 1 $, the fractional power $ \A^\pp : \ix \to \ix $ is defined by the improper Bochner integral
\begin{align}
\A^\pp u \defeq \mfrac{\sin \pi \pp }{\pi}
\ints{0}{\infty}{s^{\pp-1} (\A + sI)^{-1} \A u }{ ds}
\for u \in \ix.
\label{eq:frac-power}
\end{align}
\item
For arbitrary values $ \pp > 0 $,
the bounded linear operator $  \A^\pp : \ix \to \ix $ is defined by
\begin{align}
\label{eq:frac-power-b}
\A^\pp  \defeq \A^{\pp - \lfloor \pp \rfloor} \A^{\lfloor \pp \rfloor},
\end{align}
where $ \lfloor \pp \rfloor $ denotes the largest integer $ \le \pp $, and moreover we use the notation $ \A^0 = I $.
\end{myenumerate_indent}

\begin{example}
\begin{myenumerate}
\item
For a bounded linear operator which is self-adjoint and positive \semidefinite on a Hilbert space $ \ix $, representations \refeq{frac-power}, \refeq{frac-power-b} on one side and the standard definition of fractional powers based on spectral calculus result in the same operators.

\item The fractional powers of the integration operator $ J $ coincide with the Abel operators, \ie $ J^\pp = J_\pp $ for $ \pp > 0 $. This holds in each of the spaces $ \ix $ considered in Example~\ref{th:postype-examples} \ref{it:postype-examples-b}. 
For details, \cf \eg \cite{Plato[95],Plato[97.2]}.
\remarkend

\end{myenumerate}
\end{example}
We note that 
\begin{align}
\label{eq:semigroup-property}
\A^{\pp+\qq} = \A^{\pp} \A^{\qq} \for \pp, \qq \ge 0, \quad
\A^\pp \myu \to \myu \as \pp \downarrow 0 \for \myu \in \overline{\R(\A)},
\end{align}
cf.~\cite[Propositions 3.1.1 and 3.1.15]{Haase[06]}).
The statements in \refeq{semigroup-property}
imply that the operators $ \apsegr $ define a \cosegr on $ \overline{\R(\A)} $. This fact has great impact throughout the paper.

From \refeq{semigroup-property} and the definition
\refeq{frac-power}, it moreover follows that
\begin{align} \label{eq:chain1}
\R(\A^{\pp_2}) \subset\R(\A^{\pp_1}) \subset \overline{\R(\A)} \textup{ for all } \; 0 < \pp_1 < \pp_2 < \infty.
\end{align}
In what follows, we shall need the interpolation inequality for \fracpowers of operators, see, e.g.,
\cite{Komatsu[66]} or
\cite[Proposition 6.6.4]{Haase[06]}:
for each pair of real numbers \linebreak $ 0 < \pp < \qq $,
there exists some finite constant $ \ccdefref{interpol} = \ccref{interpol} (\pp,\qq) > 0 $ such that
\begin{align}
\norm{ \A^\pp u }
\le \ccref{interpol} \norm{\A^\qq u}^{\frac{\pp}{\qq}} \norm{ u }^{1-\frac{\pp}{\qq}}
\for u \in \ix.
\label{eq:interpol2}
\end{align}
For $ 0 < \pp < 1 = \qq $, the constant may be chosen as $ \ccref{interpol} = 2(\kappa_* + 1)$, cf., e.g., \cite[Corollary 1.1.19]{Plato[95]}.

Throughout the paper, we assume that the operator $ \A $ is one-to-one,
with a possibly non-dense range in $ \ix $,
and in addition the inverse $ \A^{-1} $ is an unbounded operator.
%
\subsection{A class of regularization schemes}
In what follows, we consider bounded linear regularization operators associated with a bounded \postype
operator $ A: \ix \to \ix $,
\begin{align}
R_\para: \ix \to \ix	\for \para > 0
\label{eq:rbeta}
\end{align}
and its companion operators
\begin{align}
S_\para \defeq  I - R_\para \A \for \para > 0.
\label{eq:sbeta}
\end{align}
We assume that the following conditions are satisfied:
\begin{align}
\norm{ R_\para } & \le \mfrac{\cst}{\para} \for \para > 0,
\label{eq:wachstum} \\
\norm{ S_\para \A^\pp } & \le \cp \para^{\pp} \for \para > 0
\qquad (0 \le \pp \le \pp_0),
\label{eq:abfall} \\
R_\para \A & = \A R_\para \for \para > 0,
\label{eq:commute}
\end{align}
where $ 0 < \pp_0 \le \infty $ denotes the possibly infinite saturation, and $ \cst $ and $ \cp $ denote finite constants. We assume that $ \cp $ is bounded on bounded intervals for  $ \pp $. In the case $ \ppz = \infty $,
condition \refeq{abfall} is considered for $ \pp < \infty $.
Note that condition \refeq{abfall} in particular implies 
$ S_\para u \to 0 $ as $ \alpha \to 0 $ for each $ u \in \ix $ and thus
$ R_\para f \to A^{-1} f $ as $ \alpha \to 0 $ for each $ f \in \R(A) $, if
$ A $ is a one-to-one operator with dense range.

We are now in a position to introduce \emph{regularizing elements}
%
\begin{align}
\upardel \defeq \ubar - R_\para( \A \ubar - \fdelta)
\for \para > 0,
\label{eq:upardel-def}
\end{align}
where $ \ubar \in \ix $ denotes an initial guess to the sought-for-solution of the equation $ A\myu = \fst $.
%
\begin{example} We first consider \lavmetit, where 
$ m \ge 1 $ denotes a fixed integer. For $ \fdelta \in \ix $ and $ v_0 = \ubar \in \ix $, the element $ \upardel \in \ix $
is given by
\begin{align*}
(\A + \para I)v_n & =  \para v_{n-1} + \fdelta \for n = 1,2,\ldots,m, \qquad
\upardel \defeq v_m.
\end{align*}
This method is of the form \refeq{upardel-def} with
%
$$ R_\para = \para^{-1} \sum_{j=1}^{m} \para^j (\A + \para I)^{-j},  $$
and the companion operator is given by $ S_\para = \para^m (\A + \para I)^{-m} $.
For $ m = 1 $, this gives Lavrentiev's classical regularization method,
$ R_\para = (\A + \para I)^{-1} $.
For \lavmetit, the conditions \refeq{wachstum}--\refeq{commute} are satisfied with
$ p_0 = m $. In fact,
for any integer $ 0 \le p \le m $,
estimate \refeq{abfall} holds with constant $ \cp = (\kappa_*+1)^m $, see
\cite[Lemma 1.1.8]{Plato[95]}.
From this intermediate result and the interpolation inequality~\refeq{interpol2}, inequality \refeq{abfall} then follows
for non-integer values $ 0 < p < m $, with constant $ \cp = 2(\kappa_*+1)^{p+1} $.
\remarkend
\end{example}
\begin{example}
Let $ A: \ix \to \ix $ be a bounded linear operator on a complex Banach space~$ \ix $.
In the present example, we assume that $ A $ satisfies a sectorial condition stronger than 
\refeq{postype} considered for \postype operators. More precisely, we assume that
for each 
$ \lambda = r e^{\ii \varphi} $, with
$ r > 0, \ \modul{ \varphi } \le \frac{\pi}{2} + \varepsilon $ with some $ \varepsilon > 0 $, 
the operator $ \A + \lambda I: \ix \to \ix $ is one-to-one and onto, and its inverse operator satisfies $  \norm{(\A + \lambda I)^{-1}} \le \frac{\kapst}{\modul{\lambda}} $, where $ \kapst > 0 $ denotes a finite constant.
Examples of this type are given by the Abel integral operators $ J_\myalpha, \ 0 < \myalpha < 1 $, considered in
Example \ref{th:postype-examples}. On the other hand, the simple integration operator considered in the same example does not satisfy such a strong sectorial condition.
We refer to \cite{Plato[95],Plato[97.2]} for more details.

For any operator $ A $ satisfying such a strong sectorial condition and any initial approximation $ \ubar \in \ix $, the abstract Cauchy problem,
\begin{align}
\kla{\udelta}^\prime\kla{t} + A\udelta\kla{t} = \fdelta \for t > 0,
\quad 
\udelta \kla{0} = \ubar,
\label{eq:acp12}
\end{align}
has a unique solution $ \udelta\kla{t}, \ t \ge 0 $.
The regularizing elements are then defined as
\begin{align*}
\upardel \defeq \udelta\kla{t} \for \alpha = \tfrac{1}{t}, \ t > 0,
\label{eq:acp3}
\end{align*}
and they in fact can be written in the form \refeq{upardel-def} with
\begin{equation*}
R_\para f = \ints{0}{t}{ e^{-s A} f }{ds } \for f \in \ix,
\ t = \tfrac{1}{\para}, \ \para > 0.
\end{equation*}
The companion operator is given by $ S_\para = e^{-t A} $ for $ t = \frac{1}{\para}, \ \para > 0 $.
For this method, the conditions \refeq{wachstum}--\refeq{commute} are satisfied with
qualification $ p_0 = \infty $. See \cite[Theorems~1.2.10 and 2.1.4]{Plato[95]} for more details.
\
\remarkend
\end{example}
\section{The logarithm $\bm{\log \A}$}
\label{log_A}
\subsection{Definition of $ \bm{\log \A} $, first properties}
We next consider logarithmic source conditions, occasionally called low order smoothness. For this, we need to introduce the logarithm $ \log A $ of a bounded \postype operator $ \A $. For selfadjoint operators in Hilbert spaces, this can be done by spectral analysis, and we refer in this context for example to \cite{Hohage[00],Tautenhahn[98]}.
In Banach spaces, the operator $ \log \A $ may be defined as the infinitesimal generator of the \cosegr of operators $ \apsegr $ considered on $ \RA $:
\begin{align}
\label{eq:log-def}
(\log \A) \myu & = \lim_{p \downarrow 0} \tfrac{1}{p}\kla{\A^p \myu-\myu},
\quad \myu  \in \domain(\log \A),
\end{align}
where
\begin{align*}
\domain(\log\A) & =
\inset{ \myu \in \ix : \lim_{p \downarrow 0} \tfrac{1}{p}\kla{\A^p \myu- \myu}
\ \textup{exists} },
\end{align*}
cf.,~e.g.,~\cite{Nollau[69],Fury[20]} or \cite[Proposition 3.5.3]{Haase[06]}.
For Tikhonov regularization with oversmoothing penalty term to solve nonlinear ill-posed problems in Banach scales,
the definition \refeq{log-def} has already been utilized in the two papers
\cite{Plato_Hofmann[22],Hofmann_Klinkhammer_Plato[23]}
to describe \los of solutions.
%
In the present paper, \los of an element $ \myu \in \ix $ by definition means
$ \myu \in \domain(\logpowa) $ for some $ \mynu \in \naza $.
Note that we obviously have $ \domain(\logpowa) \subset \RA $. In addition, $ \R(\A^\pp) \subset \domain(\logpowa)$ is valid for arbitrarily small $p>0$,
which follows from \cite[Satz~1]{Nollau[69]}. Summarizing the above notes, we have a chain of inclusions in $\ix$ as
\begin{equation} \label{eq:chain2}
\R(\A^p) \subset 
\domain(\logpowa[\mynu_2])
\subset 
\domain(\logpowa[\mynu_1])
\subset \overline{\R(\A)} 
\quad \textup{for} \ \mynu_1, \mynu_2 \in \naza, \ \mynu_1 \le \mynu_2, \  \pp>0.
\end{equation}
This in particular means that any H\"older-type smoothness is stronger than low order smoothness, a property that is already well known in the Hilbert space setting. 
\subsection{Auxiliary results for $ \bm{\log \A} $}
Throughout this subsection, we consider a bounded linear operator 
$ A :\ix \to \ix $ of \postype on a Banach space $ \ix $.
\subsubsection{Introductory remarks}
Below, for elements $ \myu \in \ix $ we consider mixed smoothness conditions of the form
\begin{align}
\label{eq:smooth-cond}
\myu = A^\pp v \textup{ for some } v \in \domain(\logpowa), 
\quad 0 \le p < \pp_0, \quad \nu \in \naza.
\end{align}
We define a source element for $ \myu $ as follows,
\begin{align}
\label{eq:log-source}
w = (\lambda I - \loga)^\nu v, \quad \textup{ with } \lambda > \mylog \norm{\A} \textup{ arbitrary but fixed}. 
\end{align}
Here and in what follows, for any real number $ x > 0 $, we denote by $ \log x $ the natural logarithm of $ x $.
%
The lower bound for $ \lambda $ considered in \refeq{log-source}
guarantees that
the operator $ \lambda I - \loga $ has a bounded inverse,
\cf the proof of the following theorem.
This in fact is the reason for considering source elements $ w $ in the shifted form \refeq{log-source}. It allows 
to rewrite smoothness condition \refeq{smooth-cond} in the form 
\begin{align}
\myu = A^\pp (\lambda I - \loga)^{-\nu} w,
\label{eq:mixed_smoothness}
\end{align}
so that the element $ w $ indeed can be conceived as a source element for $ \myu $.
In \refeq{mixed_smoothness}, the operator $ (\lambda I - \loga)^{-\nu} $ denotes the inverse operator of $ (\lambda I - \loga)^{\nu} $.
\begin{remark}
\begin{myenumerate}
\item
Note that the operators $ \lambda I - \loga $ and $ \loga $ have the same domains of definition, so the shift of $ \loga $ considered in \refeq{log-source} does not really change the mixed smoothness condition \refeq{smooth-cond}.

\item
In the case $ \norm{A} < 1 $,
the source representation \refeq{log-source} can be obtained without any shift, \ie
$ w = \logpowa v $, since we may choose $ \lambda = 0 $ then.
Note that the norm condition on $ A $ can always be obtained by rescaling the equation $ A u = \fst $ to $ a A u = a \fst $, with $ a > 0 $ sufficiently small such that the inequality $ a \norm{A} < 1 $ is satisfied. 
Note also that such a rescaling does not change the smoothness condition 
\refeq{smooth-cond} since we have $ (aA)^\pp = a^\pp A^\pp $ and 
$ \log (aA) = (\mylog a) I + \loga $, which easily follows from the respective definitions.
\remarkend
\end{myenumerate}
\end{remark}

\subsubsection{Decay of $ \norm{ S_\para \myu  } $}
Throughout this subsection, we consider regularizing operators
\refeq{rbeta}
that satisfy the conditions \refeq{wachstum}--\refeq{commute}.
It is immediate from \refeq{abfall} that
for elements of the form $ \myu \in \R(\A^\pp) $ with $ \pp \le \ppz $, we have
$ \norm{ S_\para \myu  } = \Landauno{\para^\pp} $ as $ \para \to 0 $.
The following theorem provides a similar
decay property of 
$ S_\para \myu $ as $ \para \to 0 $
for elements $ u $ satisfying a mixed source condition \refeq{smooth-cond}, which in fact serves as the fundamental ingredient for the results presented below.
Its proof utilizes \cosegr theory, in particular the Hille--Yoshida theorem. The theorem is an extention of \cite[Lemma 11]{Plato_Hofmann[22]}, where 
the case $ \nu = 1 $
is considered.
\begin{theorem}
\label{th:low-order-rate}
Under the mixed smoothness condition \refeq{smooth-cond}, we have
$$
\norm{ S_\para \myu  } \le \ccdefref{lor_a} \norm{w} {\para^p \logpowinv{\tfrac{1}{\para}}}
\for 0 < \para < 1,
$$
where $ w $ is given by \refeq{log-source}, and $ \ccref{lor_a} > 0 $ denotes some finite constant which may depend on $ \lambda $ considered in
\refeq{log-source}.
\end{theorem}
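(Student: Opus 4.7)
The backbone is the fact (from \eqref{eq:semigroup-property}) that $\{A^s\}_{s\ge 0}$ is a $C_0$-semigroup on $\overline{\R(A)}$ with infinitesimal generator $\loga$; write $\omega_0$ for its growth bound. Since $\norm{A^n}\le\norm{A}^n$ for positive integers $n$, one has $\omega_0\le\log\norm{A}$, so the hypothesis $\lambda>\log\norm{A}$ from \eqref{eq:log-source} places $\lambda$ strictly above the growth bound and into the resolvent set of $\loga$; this is precisely the setting in which the Hille--Yosida machinery applies to the iterated resolvent $(\lambda I-\loga)^{-\nu}$.

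The central step is the Laplace representation of the resolvent powers via the semigroup,
\[
(\lambda I-\loga)^{-\nu}w \;=\;\frac{1}{(\nu-1)!}\int_{0}^{\infty} s^{\nu-1}e^{-\lambda s}\,A^{s}w\,ds,
\]
valid for every $w\in\overline{\R(A)}$. Taking $w$ as in \eqref{eq:log-source} so that $v=(\lambda I-\loga)^{-\nu}w$, applying $A^p$, and then moving $S_\beta$ through the Bochner integral via \eqref{eq:commute} and the semigroup property \eqref{eq:semigroup-property}, I would arrive at
\[
S_\beta u \;=\; \frac{1}{(\nu-1)!}\int_{0}^{\infty} s^{\nu-1}e^{-\lambda s}\,S_\beta A^{p+s}w\,ds.
\]
Bounding the integrand via \eqref{eq:abfall} as $\norm{S_\beta A^{p+s}}\le c_{p+s}\beta^{p+s}=c_{p+s}\beta^p e^{-s\log(1/\beta)}$ and pulling $\beta^p$ outside reduces the estimate to controlling $\int_0^\infty s^{\nu-1}c_{p+s}e^{-(\lambda+\log(1/\beta))s}\,ds$. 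When the $c_{p+s}$ factor is absorbed into a finite constant, this integral equals $\Gamma(\nu)(\lambda+\log(1/\beta))^{-\nu}$, and since $\log(1/\beta)>0$ for $0<\beta<1$ and $\lambda+\log(1/\beta)\ge\tfrac12\log(1/\beta)$ once $\log(1/\beta)$ is sufficiently large (the bounded residual range of $\beta$ being swept into the final constant), this produces exactly the factor $\log^{-\nu}(1/\beta)$ predicted by the theorem.

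The main technical obstacle is that $c_p$ is only assumed bounded on bounded $p$-intervals, and if $p_0<\infty$ the estimate \eqref{eq:abfall} is only valid for $p+s\le p_0$. I would handle this by splitting the $s$-integral at $s=p_0-p$: on $[0,p_0-p]$ the preceding argument applies verbatim, while on the tail I would factor $A^{p+s}=A^{p_0}A^{p+s-p_0}$ and estimate
\[
\norm{S_\beta A^{p+s}}\;\le\;\norm{S_\beta A^{p_0}}\,\norm{A^{p+s-p_0}}\;\le\;c_{p_0}M\beta^{p_0}e^{\omega_0(p+s-p_0)},
\]
using the semigroup growth bound $\norm{A^q}\le Me^{\omega_0 q}$. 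Since $\lambda>\omega_0$, the tail integral converges and contributes an $O(\beta^{p_0})$ term; the exponential decay of $\beta^{p_0-p}$ in $\log(1/\beta)$ easily dominates the mild $\log^{-\nu}(1/\beta)$ factor for $\beta$ near $0$, while the regime $\beta$ bounded away from $0$ is absorbed into $\ccref{lor_a}$. A minor preliminary to verify is that $w\in\overline{\R(A)}$, which follows at once from $v\in\domain(\logpowa)\subset\overline{\R(A)}$ together with $\loga$ mapping its domain into $\overline{\R(A)}$.
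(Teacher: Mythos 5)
Your proposal is correct and follows essentially the same route as the paper: the Hille--Yosida resolvent set argument, the Laplace representation $(\lambda I-\loga)^{-\nu}w=\frac{1}{(\nu-1)!}\int_0^\infty s^{\nu-1}e^{-\lambda s}A^sw\,ds$, the split of the integral at $s=p_0-p$ with the head estimated via \refeq{abfall} (yielding the $\para^p\log^{-\nu}(1/\para)$ factor) and the tail via the semigroup growth bound (yielding a dominated $O(\para^{p_0})$ term). The only point to tidy is that when $p_0=\infty$ your first computation absorbs $c_{p+s}$ into a constant over all of $[0,\infty)$, which the hypotheses do not guarantee; the paper avoids this by splitting at an arbitrary finite $p_1>p$ in that case, exactly the device you already use for finite $p_0$.
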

\begin{proof}
There holds $ \norm{\A^q} \le C e^{\omega q} $ for $ q \ge 0 $,
where $ \omega = \log \norm{A} $ (if $ A \neq 0$), and $ C > 0 $ denote a suitable constant. 
This estimate follows easily, e.g., from the interpolation inequality \refeq{interpol2} 
applied with $ 0 < \pp < 1 = \qq $, the semigroup property~\refeq{semigroup-property} as well as the submultiplicativity of the norm of products of operators.
Thus each real $ \lambda > \omega $ belongs to the resolvent set of the operator $ \log \A: \RA \supset \domain(\log \A) \to \RA $, i.e.,
$ (\lambda I - \log \A)^{-1}: \RA \to \RA $ exists and defines a bounded operator,
\cf~\cite[Theorem 5.3, Chapter 1]{Pazy[83]}.
Since
$$ \R((\lambda I - \log \A)^{-\nu}) = \domain ((\lambda I - \log \A)^\nu) = \domain( \logpowa), $$
we may represent $ u $ as
$$ u = \A^\pp(\lambda I - \log \A)^{-\nu} w, $$
%
for some $ w \in \RA $. 
As a preparation for the considerations below, we now introduce the finite number $ \ppo $, which is defined by $ \ppo = \ppz $, if the saturation $ \ppz $ from \refeq{abfall} is finite, 
and otherwise choose $ \ppo > \pp $ arbitrary but finite.
Since (cf.~\cite[proof of Theorem 5.3, Chapter 1]{Pazy[83]})
$$  (\lambda I - \log \A)^{-\nu }w 
= \tfrac{1}{(\nu-1)!} \int_0^\infty q^{\nu-1} e^{-\lambda q} \A^q w \, dq, $$
we have
\begin{align}
\label{eq:spara-decomp}
S_\para u = \int_0^\infty h(q) \, dq = 
\int_0^{\ppo-p} h(q)  \, dq + \int_{\ppo-p}^\infty h(q)  \, dq  =:
y_1 + y_2,
\end{align}
where
%
\begin{align*}
h(q) =  \tfrac{1}{(\nu-1)!} q^{\nu-1} e^{-\lambda q} S_\para \A^{p+q} w ,
\quad q > 0.
\end{align*}
%
The element $ y_1 $ in \refeq{spara-decomp} can be estimated as
$ \normix{y_1} \le  \tfrac{\ccdefref{lor_b}}{(\nu-1)!} \norm{w} \int_0^{\ppo-p} q^{\nu-1}  \para^{p+q}\, dq $, for some finite constant $ \ccref{lor_b} > 0 $,
which follows from \refeq{abfall} and the corresponding boundedness assumption on the involved coefficient $ e_\pp $ introduced in \refeq{abfall}.
%
%
Repeated partial integration finally gives the estimate
$ \tfrac{1}{(\nu-1)!} \int_0^{\ppo-p} q^{\nu-1} \para^{p+q}\, dq 
\le \para^p \frac{1}{\modul{ \log \para }^\nu} $ for $ 0 < \para < 1 $.
%
The element $ y_2 $ can be written as follows,
\begin{align*}
y_2 =  \tfrac{1}{(\nu-1)!} \int_{\ppo-p}^\infty  q^{\nu-1} e^{-\lambda q} S_\para \A^{\ppo} \A^{q-(\ppo-p)} w \, dq,
\end{align*}
and thus we can estimate as follows:
\ccdef{lor_c}
\begin{align*}
\normix{y_2} & \le  \ccref{lor_c} \norm{w} \int_{\ppo-\pp}^\infty  q^{\nu-1} e^{-\lambda q} \para^{\ppo} e^{\omega(q-(\ppo-p))} \, dq
\\
& = \ccref{lor_c} \norm{w} e^{-\omega(\ppo-p)} \int_{\ppo-p}^\infty  q^{\nu-1} e^{-(\lambda-\omega)q} \, dq \  \para^{\ppo},
\end{align*}
for some finite constant $ \ccref{lor_c} > 0 $,
where the latter integral is finite and independent of $ \para $. This completes the proof.
\end{proof}
%
%
%
%
\section{Parameter choice strategies}
\label{parameterchoice}
Let $ A :\ix \to \ix $ be a bounded linear operator of \postype.
In what follows, we consider the regularization of the equation $ A u = \fst $
by some \regscheme, with 
some initial guess $ \ubar \in \ix $. Below, we assume that there exists a solution $ \ust \in \ix $ which satisfies condition \refeq{smooth-cond} with $ \myu = \ust - \ubar $, \ie
\begin{align}
\label{eq:smooth-cond-b}
\ubar - \ust = A^\pp v \textup{ for some } v \in \domain(\logpowa),
\quad 0 \le p < \pp_0, \quad \nu \in \naza.
\end{align}
%
%
Condition \refeq{smooth-cond-b} may be considered as a mixed smoothness condition since both the operators $ A^\pp $ and $ \log A $ are involved. Note that the case $ \pp = 0 $, this is low order smoothness, is covered by the analysis.
We allow here mixed smoothness conditions since the case $ p > 0 $ does not require much additional effort.
The notation mixed smoothness is utilized in \cite{Klinkhammer[24]}.

\subsection{A priori parameter choice strategies}
\ccdef{apriori}%
%
For the setting considered in the beginning of the present Section
\ref{parameterchoice}, we consider the following a~priori parameter choice,
\begin{align}
\label{eq:apriori}
\pardel = \ccref{apriori} \delta^{\frac{1}{\pp+1}}\logpow{\frac{\mynu}{\pp+1}}{\tfrac{1}{\delta}},
\qquad 0 < \delta  \le \delta_0 ,
\end{align}
for some constants $ 0 < \delta_0 < 1 $ and $ \ccref{apriori} > 0 $.
%
%
\begin{theorem}[A priori parameter choice]
\label{th:apriori}
Let the conditions stated in the beginning of Section
\ref{parameterchoice} be satisfied,
including the mixed smoothness condition \refeq{smooth-cond-b}.
Then the 
a priori parameter choice \refeq{apriori} yields
\ccdef{cnob}
%
\begin{align}
\label{eq:apriori-esti}
\norm{\udel -\ust} \le \ccref{cnob} \dw
\delta^{\frac{\pp}{\pp+1}}\logpow{-\frac{\mynu}{\pp+1}}{\tfrac{1}{\delta}},
\quad 0 < \delta \le \delta_0.
\end{align}
%
In \refeq{apriori-esti}, the notation
$ \dw \defeq \max\{\norm{w},1\} $ is used, with
$ w $ corresponding to the source representation \refeq{log-source}. 
In addition, 
$ \ccref{cnob} > 0 $
denotes some finite constant that is independent of $ \delta $ and $ w $.
\end{theorem}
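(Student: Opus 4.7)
The proof plan is the standard split into approximation error and data propagation error, followed by substitution of the a~priori choice and a careful handling of the logarithmic factor.

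First I would use the identity $A\ubar - \fdelta = A(\ubar-\ust) + (\fst - \fdelta)$ together with the definition \refeq{upardel-def} to write the fundamental decomposition
\begin{equation*}
\udel - \ust = S_\para(\ubar - \ust) \;-\; R_\para(\fst - \fdelta),
\end{equation*}
where $ \para = \para(\delta) $ from \refeq{apriori}. Taking norms, condition \refeq{wachstum} and the noise bound $ \norm{\fst - \fdelta} \le \delta $ immediately yield the data propagation estimate $ \norm{R_\para(\fst - \fdelta)} \le \cst \delta / \para $. For the approximation error I would invoke Theorem~\ref{th:low-order-rate} applied to $ \myu = \ubar - \ust $: provided $ \delta_0 $ is small enough that $ \para(\delta) < 1 $ for all $ 0 < \delta \le \delta_0 $, the hypothesis \refeq{smooth-cond-b} gives
\begin{equation*}
\norm{S_\para(\ubar-\ust)} \le \ccref{lor_a} \norm{w} \para^\pp \logpowinv{\tfrac{1}{\para}}.
\end{equation*}
Combining both bounds yields
\begin{equation*}
\norm{\udel-\ust} \le \ccref{lor_a} \norm{w} \para^\pp \logpowinv{\tfrac{1}{\para}} + \mfrac{\cst \delta}{\para}.
\end{equation*}

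Next I would substitute the a~priori choice $ \para(\delta) = \ccref{apriori} \delta^{1/(\pp+1)} \logpow{\mynu/(\pp+1)}{1/\delta} $. The data term is routine:
\begin{equation*}
\mfrac{\delta}{\para(\delta)} = \ccref{apriori}^{-1} \delta^{\pp/(\pp+1)} \logpow{-\mynu/(\pp+1)}{\tfrac{1}{\delta}},
\end{equation*}
and for the approximation term $ \para(\delta)^\pp = \ccref{apriori}^\pp \delta^{\pp/(\pp+1)} \logpow{\pp\mynu/(\pp+1)}{\tfrac{1}{\delta}} $. To obtain the target rate one only has to control the factor $ \logpowinv{1/\para(\delta)} $: a direct calculation gives $ \mylog(1/\para(\delta)) = \tfrac{1}{\pp+1}\mylog(1/\delta) - \tfrac{\mynu}{\pp+1}\mylog\mylog(1/\delta) - \mylog \ccref{apriori} $, so that $ \mylog(1/\para(\delta)) \ge \tfrac{1}{2(\pp+1)} \mylog(1/\delta) $ for all sufficiently small $ \delta $. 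Choosing $ \delta_0 $ accordingly (it is here that the restriction $ \delta_0 < 1 $ and possibly smaller is needed), one concludes $ \logpowinv{1/\para(\delta)} \le C\,\logpow{-\mynu}{1/\delta} $, and the logarithmic exponents balance:
\begin{equation*}
\para(\delta)^\pp \logpowinv{1/\para(\delta)} \le C \delta^{\pp/(\pp+1)} \logpow{-\mynu/(\pp+1)}{\tfrac{1}{\delta}}.
\end{equation*}

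Adding the two contributions and bounding $ \norm{w} \le \dw $ in the first term while estimating the second (which does not depend on $ w $) by $ \dw $ using $ \dw \ge 1 $, the estimate \refeq{apriori-esti} follows with a suitable constant $ \ccref{cnob} $ depending on $ \ccref{apriori}, \ccref{lor_a}, \cst $ and on $ \lambda $ from \refeq{log-source}, but not on $ \delta $ or $ w $.

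The only genuine subtlety is the asymptotic equivalence $ \mylog(1/\para(\delta)) \asymp \mylog(1/\delta) $: I expect this to be the main (mild) obstacle, since it forces a quantitative smallness condition on $ \delta_0 $ in order that the dominant $ \mylog(1/\delta) $ term absorbs the lower order $ \mylog\mylog(1/\delta) $ correction and the constant $ \mylog \ccref{apriori} $. Everything else is a mechanical combination of Theorem~\ref{th:low-order-rate} with the growth bound \refeq{wachstum}.
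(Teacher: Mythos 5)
Your decomposition, the use of Theorem~\ref{th:low-order-rate} together with \refeq{wachstum}, and the resulting basic estimate are exactly those of the paper; the only methodological difference is that you balance the logarithmic factors by computing $\log(1/\pardel)$ explicitly and absorbing the $\log\log(1/\delta)$ correction, whereas the paper routes the same bookkeeping through the auxiliary functions $\mychi{q}{\mu}$ and their inverses from the appendix. Your direct computation is perfectly valid (and arguably more self-contained), and the exponent arithmetic $\tfrac{p\nu}{p+1}-\nu=-\tfrac{\nu}{p+1}$ checks out. The one point you leave open is that your argument only yields \refeq{apriori-esti} for $0<\delta\le\delta_1$ with $\delta_1$ ``sufficiently small'', i.e.\ you implicitly shrink $\delta_0$; the theorem asserts the bound on the whole range $0<\delta\le\delta_0$ for the $\delta_0$ fixed in \refeq{apriori}. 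The paper closes this gap with a one-line degenerate-case argument: for $\delta_1<\delta\le\delta_0$ one has $\norm{\upardel[\pardel]-\ust}\le \cpz\norm{\ubar-\ust}+\cst\delta/\pardel$, which is $\Landauno{\dw}$, while $\delta^{\pp/(\pp+1)}\logpow{-\mynu/(\pp+1)}{\tfrac{1}{\delta}}$ is bounded away from zero on that compact range, so the estimate holds there after enlarging the constant. You should append this observation; with it, your proof is complete.
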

\proof
From \refeq{upardel-def} we obtain
\begin{align*}
\upardel - \ust = S_\para (\ubar-\ust) - R_\para( \A \ust - \fdelta)
\for \para > 0,
\end{align*}
and thus, \cf
Theorem \ref{th:low-order-rate} and \refeq{wachstum},
\begin{align}
\norm{\upardel - \ust}
& \le \norm{S_\para(\ubar-\ust)} + \norm{R_\para( \A \ust - \fdelta)}
\nonumber \\
& \le
\ccref{lor_a}
\norm{w}\para^\pp \logpowinv{\tfrac{1}{\para}} + \cst\mfrac{\delta}{\para}
\for 0 < \para < 1.
\label{eq:regerror_basis_esti}
\end{align}
%
For $ \para = \pardel $, the second term in \refeq{regerror_basis_esti} clearly has the same order as the upper bound in \refeq{apriori-esti}. 
For an estimation of the first term in \refeq{regerror_basis_esti}, we 
use the notation
$ \mychi{q}{\mu}(t) = t^q \kla{\mylogb{t}}^{\mu}, \ 0 < t < 1 $, from \refeq{chi-def} in the appendix below.
We then have
\begin{align*}
\pardel = \ccref{apriori} \mychi{\frac{1}{\pp+1}}{\frac{\nu}{\pp+1}}(\delta)
\asymp 
\mychiinv{\pp+1}{-\nu}(\delta),
\end{align*}
%
\cf~item \ref{it:low_oder_prep_b} in the appendix presented in Section \ref{appendix} below.
For the meaning of $ \asymp $, we also refer to the appendix. 
Thus we have
%
$ \mychi{\pp+1}{-\nu}(\pardel) \asymp \delta $ for $ \delta > 0 $ sufficiently small,
\cf~items \ref{it:low_oder_prep_b} and \ref{it:low_oder_prep_c} in the appendix.
This finally results in
\begin{align*}
\pardel^\pp \logpowinv{\tfrac{1}{\pardel}} 
\asymp \frac{\delta}{\pardel} =
\ccref{apriori}^{-1}
\delta^{\frac{\pp}{\pp+1}}\logpow{-\frac{\mynu}{\pp+1}}{\tfrac{1}{\delta}}
\end{align*}
for $ \delta > 0 $ sufficiently small,
\cf~item \ref{it:low_oder_prep_a} in the appendix below.
%
Therefore, the first term in \refeq{regerror_basis_esti} also has the same order as the upper bound in \refeq{apriori-esti}.

We thus have obtained the desired error rate 
for $ 0 < \delta \le \delta_1 $,
with $ \delta_1 > 0 $ chosen sufficiently small.
In the degenerated case $ \delta_1 < \delta \le \delta_0 $, the given error estimate follows both from
$ \norm{\udel -\ust} 
\le \cpz \norm {\ubar-\ust} + \cst \lfrac{\delta}{\pardel} = \Landauno{\dw} $,
\cf \refeq{regerror_basis_esti},
and \refeq{abfall} for $ \pp = 0 $,
and the fact that
$ \delta^{\fracb{\pp}{\pp+1}}\logpow{-\fracb{\mynu}{\pp+1}}{\tfrac{1}{\delta}} $ is bounded away from 0.
\proofend
%

\noindent
It is clear from its proof that the statement of Theorem \ref{th:apriori} remains valid for any a priori parameter choice $ \pardel \asymp \delta^{\fracb{\pp}{\pp+1}}\logpow{-\fracb{\mynu}{\pp+1}}{\tfrac{1}{\delta}} $ as $ \delta \to 0 $. 
%
%

As a corollary of Theorem \ref{th:apriori}, we obtain the following interpolation inequality for mixed smoothness. It can be utilized for the discrepancy principle considered in the following section but may also be of independent interest.
We continue to assume that $ \A: \ix \to \ix $ is a bounded linear operator of \postype on a Banach space $ \ix $. 
\begin{theorem}[Interpolation inequality for mixed smoothness]
\label{th:loginterpol}
Let  $ u \in \ix $ be representable in the form
$ u = A^\pp v $ with $ v \in \domain(\logpowa) $, for some $ \pp \ge 0 $ and some integer $ \nu \ge 1 $.
In addition, let $ \norm{\A u} \le \delta_0 $, with
$ 0 < \delta_0 < 1 $ fixed.
Then we have
%
\begin{align}
\label{eq:loginterpol}
\norm{u} \le  \ccref{cnob} \dw
 \norm{Au}^{\frac{\pp}{\pp+1}}\logpow{-\frac{\mynu}{\pp+1}}{\tfrac{1}{\norm{Au}}},
\end{align}
where the numbers $  \ccref{cnob} $ and $ \dw $
are taken from Theorem \ref{th:apriori}.
\end{theorem}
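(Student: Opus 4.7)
The plan is to derive the interpolation inequality directly from Theorem~\ref{th:apriori} by a standard ``trivialization'' trick: apply the a priori convergence rate to an artificially constructed noisy problem whose regularized solution is identically zero, so that the error estimate becomes a statement about $\|u\|$ itself.

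More concretely, I would fix any regularization scheme satisfying \refeq{wachstum}--\refeq{commute} (the constants $\ccref{cnob}$ and the associated $\dw$ in the statement are understood as those produced by that scheme, exactly as in Theorem~\ref{th:apriori}). Then I would set
$$
\ubar := 0, \qquad \ust := u, \qquad \fst := Au, \qquad \fdelta := 0, \qquad \delta := \norm{Au},
$$
and verify the hypotheses of Theorem~\ref{th:apriori}: we have $\norm{\fst - \fdelta} = \norm{Au} = \delta \le \delta_0 < 1$, and the mixed smoothness condition \refeq{smooth-cond-b} holds because $\ubar - \ust = -u = A^\pp(-v)$ with $-v \in \domain(\logpowa)$. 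The associated source element in the sense of \refeq{log-source} is $-w := -(\lambda I - \loga)^\nu v$, so its norm coincides with that of $w$ and the quantity $\dw$ is unchanged.

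The key observation is that for this particular choice, the regularizing element from \refeq{upardel-def} collapses:
$$
\upardel = \ubar - R_\para(A\ubar - \fdelta) = 0 - R_\para(0 - 0) = 0
$$
for every $\para > 0$, hence in particular for $\para = \pardel$ from the a priori rule \refeq{apriori}. Consequently
$$
\norm{\udel - \ust} = \norm{u},
$$
and Theorem~\ref{th:apriori} yields
$$
\norm{u} \le \ccref{cnob}\,\dw\,\delta^{\frac{\pp}{\pp+1}}\logpow{-\frac{\mynu}{\pp+1}}{\tfrac{1}{\delta}}
= \ccref{cnob}\,\dw\,\norm{Au}^{\frac{\pp}{\pp+1}}\logpow{-\frac{\mynu}{\pp+1}}{\tfrac{1}{\norm{Au}}},
$$
which is exactly \refeq{loginterpol}.

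There is essentially no obstacle: the only points requiring care are (i) checking that ``$-v$'' is an admissible source element (it is, since $\domain(\logpowa)$ is a linear subspace), (ii) noting that $\dw$ is invariant under the sign change on $w$, and (iii) handling the edge case $\delta = \norm{Au} = 0$, which by injectivity of $A$ forces $u = 0$ and makes the inequality trivial (so one only needs $\norm{Au} \in (0,\delta_0]$, where the bound of Theorem~\ref{th:apriori} directly applies).
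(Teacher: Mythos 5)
Your proposal is correct and takes essentially the same route as the paper: the author's own proof likewise applies Theorem~\ref{th:apriori} with $\ubar = \fdelta = 0$, $\ust = u$, $\delta = \|Au\|$, and observes that the regularized elements then vanish identically, so the a priori error bound becomes the claimed inequality for $\|u\|$. The only small point to make explicit is that the auxiliary regularization scheme must be chosen with saturation $p_0 > p$ so that the hypothesis $0 \le p < p_0$ of Theorem~\ref{th:apriori} is satisfied (always possible, e.g.\ iterated Lavrentiev with sufficiently many steps); your edge-case remarks on $-v$ and on $\|Au\| = 0$ are fine.
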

\proof Follows immediately from Theorem \ref{th:apriori}, applied with $ \ubar = \fdelta = 0, \, \ust = \myu $ and $ \delta = \norm{Au} $, and by utilizing some
\regschemeb
with saturation $ \ppz > \pp $. Note that the considered setting means $ \upardel \equiv 0 $.
\proofend

%
%
\subsection{The discrepancy principle}
\label{noise_diskrepanz_par}
In what follows, the discrepancy principle as a parameter choice is considered
for a \regscheme,
this time with saturation $ \ppz > 1 $. Below, 
convergence rates are proven that are similar to those obtained for a priori parameter choices.
%

For method \refeq{upardel-def}, we shall choose the regularization parameter 
by means of the norm of the defect
$$ \rpardel \defeq A\upardel - \fdel $$
and the error level $ \delta $. For further details, let again
$ S_\para =  I - R_\para A $ and let the main conditions
\refeq{abfall}--\refeq{commute} be satisfied, with saturation $ p_0 > 1 $.
If $ \ust \in \ix $ solves equation~\refeq{maineq}, then we have
$$ \rpardel = S_\para \kla{A\ubar - \fdel} =
S_\para A\kla{\ubar - \ust} +
S_\para\kla{A\ust - \fdel}, $$
and estimate \refeq{abfall}
applied with $ \pp = 0 $ then
implies
\begin{equation}
\modul{ \norm{ \rpardel } - \norm{ S_\para A\kla{\ubar - \ust} } } \le \czer \delta.
\label{eq:dreieck2}
\end{equation}
This combined with \refeq{abfall} for $ \pp = 1 $ yields
$$ \limsup_{\para \to 0} \, \norm{ \rpardel } \le
\czer \delta, $$
which is one step towards applicability of the a posteriori parameter choice
considered in Algorithm~\ref{th:disrepancy_principle} below.
Another ingredient is the condition
\begin{equation}
\para \mapsto S_\para u  \text{ is a continuous function on }
\inset{ \ 0 < \para < \infty } \text{ for each } u \in \ix
\label{eq:H_t-stetig}
\end{equation}
which holds for \lavmetit, for example.
%
%
This continuity condition
guarantees that the set of parameters $ \para_\delta > 0 $ considered in 
Algorithm~\ref{th:disrepancy_principle} below
is non-void.
%
%

We are now in a position to present a discrepancy principle
for the considered regularization method.
As a further preparation, 
we extent the notation in \refeq{rbeta} by additionally considering the vanishing operator
$ R_\infty = 0 : \ix \to \ix $, which allows to consider 
in \refeq{upardel-def} the element
$ \upardel[\infty] = \ubar $. 
\begin{algorithm}[Discrepancy principle]
\label{th:disrepancy_principle}
Fix positive constants
$ b_0, \ b_1 $ with $ b_1 \ge b_0  > \czer $,
with $ \czer $ from \refeq{abfall}.
\begin{myenumerate_indent}
\item
If $ \norm{ A \ubar - \fdelta } \le b_1 \delta $
then set $ \pardel = \infty $, \ie
$ \upardel[\infty] = \ubar $.

\item
If otherwise $ \norm{ A \ubar - \fdelta  } > b_1 \delta $
then choose the parameter
$ 0 < \para_\delta < \infty $ such that the following
conditions are satisfied,
$$ b_0 \delta \le \norm{  \rpardel[\pardel] }
\le b_1 \delta.
\remarkend
$$
\end{myenumerate_indent}
\end{algorithm}
%
\noindent
The parameter $ \pardel $ depends also on the perturbed \rhs $ \fdel $
and hence is an a~posteriori parameter choice. For notational purpose, this dependence is not further indicated, however.
Note that the continuity condition \refeq{H_t-stetig} and the conditions
\refeq{abfall}, \refeq{commute} with saturation $ \ppz > 1 $ guarantee that the set of parameters $ \para_\delta > 0 $ considered in
Algorithm \ref{th:disrepancy_principle}
is non-void.
Note also that (b) in Algorithm \ref{th:disrepancy_principle} is the standard case, while
(a) corresponds to a degenerated case.

The statement \refeq{rate2} in the theorem below shows that the discrepancy principle,
applied to
a \regscheme
with saturation $ \ppz > 1 $,
achieves the same rates as a priori parameter choices. Note that,
due to the condition $ \ppz > 1 $, the theorem
is applicable for \lavmetit for $ m \ge 2 $ only.
We continue to assume that $ \A: \ix \to \ix $ is a bounded linear operator of \postype on a Banach space $ \ix $. 
\begin{theorem}[A posteriori parameter choice]
%
%
Consider a \regschemec
with saturation $ \pp_0 > 1 $.
Let $ \ubar - \ust = A^\pp v $ for some $ v \in \domain(\logpowa) $,
for some real number $ 0 \le  \pp < \pp_0 - 1 $
and some integer $ \nu \ge 1 $, and let $ 0 < \delta_0 < 1 $.
Then
there exist finite constants
$ \ccdefref{apost_a}, \, \ccdefref{apost_b} > 0 $, 
such that for any $ 0 < \delta \le \delta_0 $, we have
%
\begin{align}
\norm{ \upardel[\pardel] - \ust }
& \le \ccref{apost_a} \delta^{\frac{\pp}{\pp+1}}\logpow{-\frac{\mynu}{\pp+1}}{\tfrac{1}{\delta}},
\label{eq:rate2} \\
\para_\delta & \ge
\ccref{apost_b} \delta^{\frac{1}{\pp+1}}\logpow{\frac{\mynu}{\pp+1}}{\tfrac{1}{\delta}},
\label{eq:pardelrate2}
\end{align}
where the parameter $ \pardel $ is chosen according to the discrepancy principle introduced above, cf.~Algorithm \ref{th:disrepancy_principle}.
%
\label{th:disparameter}
\end{theorem}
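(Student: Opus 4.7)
My plan is to mimic the balance argument in the proof of Theorem~\ref{th:apriori}, letting the discrepancy principle play the role the explicit a~priori choice plays there. I would first treat case~(b) of Algorithm~\ref{th:disrepancy_principle}, in which $0 < \pardel < \infty$, and dispose of the degenerate case~(a) at the end. In case~(a) we have $\upardel[\pardel] = \ubar$ and $\norm{A\ubar - \fdelta} \le b_1 \delta$, so $\norm{A(\ubar - \ust)} \le (b_1+1)\delta$; Theorem~\ref{th:loginterpol} applied directly to $\ubar - \ust = A^\pp v$ then gives \refeq{rate2}, while \refeq{pardelrate2} is vacuous because $\pardel = \infty$. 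For case~(b), the backbone is the identity $\rpardel[\pardel] = S_{\pardel}A(\ubar - \ust) + S_{\pardel}(\fst - \fdelta)$; bounding the second summand by $\czer\delta$ via \refeq{abfall} at $\pp=0$, together with $b_0\delta \le \norm{\rpardel[\pardel]} \le b_1\delta$ and the assumption $b_0 > \czer$, delivers the two-sided estimate $(b_0 - \czer)\delta \le \norm{S_{\pardel}A(\ubar - \ust)} \le (b_1 + \czer)\delta$.

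For the lower bound \refeq{pardelrate2} I write $A(\ubar - \ust) = A^{\pp+1}v$ with $v \in \domain(\logpowa)$; since $\pp + 1 < \pp_0$, Theorem~\ref{th:low-order-rate} applies at exponent $\pp+1$ and yields $\norm{S_{\pardel}A(\ubar - \ust)} \le C\norm{w}\pardel^{\pp+1}|\log\pardel|^{-\nu}$. Combining this with the left inequality $(b_0 - \czer)\delta \le \norm{S_{\pardel}A(\ubar - \ust)}$ produces $\pardel^{\pp+1}|\log\pardel|^{-\nu} \ge C'\delta$, which inverts to \refeq{pardelrate2} via the chi-function asymptotics of the appendix exactly as in the proof of Theorem~\ref{th:apriori}. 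For the error bound \refeq{rate2} I use the splitting $\upardel[\pardel] - \ust = S_{\pardel}(\ubar - \ust) - R_{\pardel}(\fst - \fdelta)$; the noise term satisfies $\norm{R_{\pardel}(\fst - \fdelta)} \le \cst\delta/\pardel$ by \refeq{wachstum}, and inserting the lower bound just obtained converts this into the target rate $O(\delta^{\pp/(\pp+1)}|\log\delta|^{-\nu/(\pp+1)})$.

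The smoothness term $S_{\pardel}(\ubar - \ust)$ I would bound by invoking the interpolation inequality of Theorem~\ref{th:loginterpol}. Since \refeq{commute} makes $S_\alpha = I - R_\alpha A$ commute with every fractional power $A^q$, taking the limit in \refeq{log-def} shows that $S_{\pardel}$ preserves $\domain(\logpowa)$ and satisfies $(\lambda I - \loga)^\nu S_{\pardel}v = S_{\pardel}w$, whose norm is at most $\czer\norm{w}$. Thus the representation $S_{\pardel}(\ubar - \ust) = A^\pp(S_{\pardel}v)$ meets the hypothesis of Theorem~\ref{th:loginterpol} with a source norm controlled independently of $\pardel$, while $\norm{A\,S_{\pardel}(\ubar - \ust)} = \norm{S_{\pardel}A(\ubar - \ust)} \le (b_1 + \czer)\delta$ supplies the small image under $A$. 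The conclusion of Theorem~\ref{th:loginterpol} then delivers $\norm{S_{\pardel}(\ubar - \ust)} \le C''\delta^{\pp/(\pp+1)}|\log\delta|^{-\nu/(\pp+1)}$, and summing with the noise-term bound produces \refeq{rate2}. The main obstacle I anticipate is the intertwining claim $(\lambda I - \loga)^\nu S_{\pardel}v = S_{\pardel}w$ on the relevant domain; once it is in hand, the rest is a mechanical transcription of the argument for Theorem~\ref{th:apriori}, and the usual small-$\delta$ adjustment needed to enforce the $\delta_0$-hypothesis of Theorem~\ref{th:loginterpol}, together with the trivial large-$\delta$ bound, are handled exactly as at the end of that proof.
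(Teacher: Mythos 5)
Your proposal is correct and follows essentially the same route as the paper: the two-sided discrepancy estimate for $\norm{S_{\pardel}A(\ubar-\ust)}$, Theorem~\ref{th:low-order-rate} applied to $A(\ubar-\ust)=A^{\pp+1}v$ (using $\pp+1<\pp_0$) to invert for the lower bound on $\pardel$, and Theorem~\ref{th:loginterpol} applied to $S_{\pardel}(\ubar-\ust)$ via the commutation argument for the smoothness term. Your explicit treatment of the degenerate case $\pardel=\infty$ is a small addition the paper leaves implicit, but otherwise the arguments coincide.
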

%
%
\proof
We verify first the lower bound for the parameter $ \pardel $ in
\refeq{pardelrate2},
and for this purpose, \mywlog we may assume that
$ 0 < \pardel < 1 $ holds.
From estimate \refeq{dreieck2}
and Theorem~\ref{th:low-order-rate} we then find
\ccdef{apost_c}
\begin{equation}
\kla{b_0 - \cp[0] } \delta
\le 
\norm{ S_{\para_{\delta}} A\kla{\ubar - \ust} }
\le
\ccref{apost_c} \cdott \pardel^{\pp+1} \logpowinv{\tfrac{1}{\pardel}} = \ccref{apost_c} \mychi{\pp+1}{-\mynu}(\pardel), 
\label{eq:lowerbound}
\end{equation}
%
%
%
for some finite constant $ \ccref{apost_c} > 0 $, and \mywlog we may assume
$ \ccref{apost_c} \ge b_0-\cpz $.
For the meaning of the notation $\mychi{\pp+1}{-\mynu} $, we refer to \refeq{chi-def} in the appendix below.
A reformulation of \refeq{lowerbound} gives,
utilizing items \ref{it:low_oder_prep_b} and \ref{it:low_oder_prep_c}
of the appendix below, 
%
%
\ccdef{apost_d}
\ccdef{apost_e}
\ccdef{apost_f}
\begin{align*}
\pardel & \ge \mychiinv{p+1}{-\mynu}(\ccref{apost_d} \delta)
\ge \ccref{apost_e} \mychi{\frac{1}{p+1}}{\frac{\mynu}{p+1}}(\ccref{apost_d} \delta)
\\
& \ge \ccref{apost_f} \mychi{\frac{1}{p+1}}{\frac{\mynu}{p+1}}(\delta)
= \ccref{apost_f} \delta^{\frac{1}{\pp+1}}\logpow{\frac{\mynu}{\pp+1}}{\tfrac{1}{\delta}},
\end{align*}
where $ \ccref{apost_d} = \frac{b_0-\cpz}{\ccref{apost_c}} \le 1 $, and 
$ \ccref{apost_e} $ and $ \ccref{apost_f} $ denote appropriately chosen finite positive constants.
Estimate \refeq{pardelrate2} is thus verified.

To prove estimate \refeq{rate2} for
the error $ \norm{ \upardel[\pardel] - \ust } $,
we start with
%
the basic estimate, \cf~\refeq{regerror_basis_esti},
\begin{equation}
\norm{ \upardeldel - \ust }
\le \norm{ S_{\pardel}\kla{\ubar - \ust}} + \cst \mfrac{\delta}{\pardel}.
\label{eq:mainineq2}
\end{equation}
The second term on the \rhs of \refeq{mainineq2} can be properly estimated by using estimate \refeq{pardelrate2},
and below we consider the first term on the \rhs of \refeq{mainineq2}. Below, it will be estimated by means of the interpolation inequality for mixed smoothness considered in
Theorem \ref{th:loginterpol}.
For this purpose, we first note that estimate \refeq{dreieck2} yields
\begin{equation}
\norm{ S_{\pardel} A\kla{\ubar - \ust} }
\le \kla{\cpz + b_1} \delta.
\label{eq:Stestimate}
\end{equation}
%
%
As a further preparation we note that, for any $ \para > 0 $, the element
$ S_{\para} \myu $ satisfies a mixed source source condition
\refeq{smooth-cond} if $ \myu \in \ix $ does. In addition, the former source element that does not exceed, up to a constant factor, the norm of the latter source element.
In fact, the operator $ S_\para $ commutes with $ A $ by assumption, and thus it commutes with each fractional power $ A^\pp, \, \pp > 0 $. For each 
$ v \in \domain (\loga) $ we thus have
$ S_\para v \in \domain(\loga) $ and $ S_\para (\loga) v = (\loga) S_\para v $, and the claim on the mixed source representability of $ S_{\para} \myu $ is now easily obtained.

We thus may apply the interpolation inequality for mixed smoothness \refeq{loginterpol} 
to the element $ \myu= S_{\pardel} \kla{\ubar - \ust } $ for any $ 0 < \delta \le \delta_1 $, with $ 0 < \delta_1 < \tfrac{1}{\cpz + b_1} $ arbitrary but fixed,
\cf estimate \refeq{Stestimate}. 
We finally obtain the following estimate:
\ccdef{apost_z}%
\ccdef{apost_g}%
\begin{align*}
\norm{ S_{\pardel} \kla{\ubar - \ust } }
& \le
\ccref{apost_z} \norm{ AS_{\para_\delta} \kla{\ubar - \ust} }^{\frac{\pp}{\pp+1}}
\cdot \logpow{-\frac{\mynu}{\pp+1}}{\tfrac{1}{\norm{AS_{\para_\delta} \kla{\ubar - \ust}}}}
\\
& \le
\ccref{apost_z} \kla{\kla{\cpz + b_1} \delta}^{\frac{\pp}{\pp+1}}\cdot \logpow{-\frac{\mynu}{\pp+1}}{\tfrac{1}{\kla{\cpz + b_1} \delta}}
\\
& \le
\ccref{apost_g} \delta^{\frac{\pp}{\pp+1}}\cdot \logpow{-\frac{\mynu}{\pp+1}}{\tfrac{1}{\delta}},
\end{align*}
where $ \ccref{apost_z}, \, \ccref{apost_g} $ are some constants that may depend on the source element associated with $ \ubar - \ust $ but are independent of $ \delta $.
This completes the proof of the first statement \refeq{rate2} of the theorem for the case
$ 0 < \delta \le \delta_1 $.
In the case $ \delta_1 < \delta \le \delta_0 $, the statement follows
similar to the proof of Theorem \ref{th:apriori}: the error
$ \norm{\udel -\ust} $ stays bounded while
$ \delta^{\fracb{\pp}{\pp+1}}\logpow{-\fracb{\mynu}{\pp+1}}{(\lfrac{1}{\delta})} $ is bounded away from 0.
\proofend
We note that the two constants $ \ccref{apost_a}, \, \ccref{apost_b} $ in Theorem
\ref{th:disparameter}
may depend on the source element corresponding to
the initial error $ \ubar - \ust $.
\section{A low order smoothness example}
\label{loworder_example}
In the present section, we present an example of a function that satisfies a \losoc with respect to the integration operator on a space of continuous functions.
For related considerations in $L^p$-spaces, we refer to
\cite[Remark 2.5 on p.~52]{Samko_Kilbas_Marichev[06]}.
We only present basic ideas and leave the details to the reader.

Let  
$ \co := \inset{u \in C[0,1]:u(0) = 0 }$ and
\begin{align}
\klaf{Ju}(x) = \ints{0}{x}{ u(\myxi) }{ d\myxi } \qquad (0 \le x \le 1, \ u \in \co).
\label{eq:inteqop}
\end{align}
For this setting, the integration operator $ J: \co \to \co $ defines an operator \ofpostype with dense range, 
see Example \ref{th:postype-examples}. Below, we show that the continuous function
\begin{align}
\label{eq:ulog}
u(\myxi) = \left\{ 
\begin{array}{rl} (-\mylog c \myxi)^{-\kappa}, & 0 < \myxi \le 1 , \\
0, & \myxi = 0,
\end{array} \right.
\end{align}
belongs to $ \domain(\logj) $ 
for $ 0 < c < 1 $ and $ \kappa > 1 $ arbitrary but fixed. 
We start with an auxiliary result for \cosegrs on general Banach spaces.
\begin{proposition}
\label{th:semigroup-prop}
Let $ T(t), \, t \ge 0, $ be a \cosegr on a Banach space $ \ix $
 with infinitesimal generator $ A : \ix \supset \domain(A) \to \ix $, and let $ u \in \ix $.
Moreover, let $ T(t) u $ be differentiable at $ t = 1 $, and let $ T(1) $ be a one-to-one operator.  Then $ u \in \domain(A) $ holds if and only if $ \frac{d}{dt} T(t) u |_{t = 1} \in \R(T(1)) $.
\end{proposition}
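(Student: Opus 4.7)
The plan is to split the equivalence into the two directions, with the forward direction following immediately from standard $C_0$-semigroup theory and the reverse direction needing a more delicate argument in which the injectivity of $T(1)$ is used to cancel the operator $T(1)$ on both sides of an integrated identity.

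For the forward direction, assume $u \in \domain(A)$. Then by a classical result (for instance \cite[Theorem~2.4, Chapter 1]{Pazy[83]}), the orbit $t \mapsto T(t)u$ is continuously differentiable on $[0,\infty)$ with $\frac{d}{dt}T(t)u = T(t)Au = AT(t)u$. Evaluating at $t=1$ gives $\frac{d}{dt}T(t)u|_{t=1} = T(1)Au \in \R(T(1))$, which is all that is required.

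For the reverse direction, set $v := \frac{d}{dt}T(t)u|_{t=1}$ and assume $v = T(1)w$ for some $w \in \ix$. The first observation is that the assumed differentiability at $t=1$, combined with $T(1+h)u = T(h)T(1)u$, implies
\begin{align*}
\lim_{h \downarrow 0} \tfrac{1}{h}\bigl(T(h)T(1)u - T(1)u\bigr) = v,
\end{align*}
so that $T(1)u \in \domain(A)$ with $A(T(1)u)=v=T(1)w$. Now since $T(1)u \in \domain(A)$, the orbit $t \mapsto T(t)T(1)u = T(1+t)u$ is continuously differentiable on $[0,\infty)$ with derivative $T(t)A T(1)u = T(t)T(1)w = T(1)T(t)w$. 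Integrating from $0$ to $t$ and invoking the semigroup property yields
\begin{align*}
T(1)\bigl(T(t)u - u\bigr) = T(1+t)u - T(1)u = \int_0^t T(1)T(s)w \, ds = T(1)\int_0^t T(s)w \, ds.
\end{align*}

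The key step is now to cancel $T(1)$ from both sides, which is legitimate precisely because $T(1)$ is assumed to be one-to-one. This gives $T(t)u - u = \int_0^t T(s)w \, ds$ for all $t \ge 0$. Dividing by $t$ and letting $t \downarrow 0$, strong continuity of the semigroup yields $\lim_{t \downarrow 0} \frac{1}{t}(T(t)u - u) = w$, so $u \in \domain(A)$ with $Au = w$. The main (and essentially only) obstacle in the argument is to get from a statement about $T(1)u$ lying in $\domain(A)$ to the corresponding statement about $u$ itself; injectivity of $T(1)$ is the ingredient that makes this passage possible, which is exactly why it appears among the hypotheses.
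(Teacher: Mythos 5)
Your proof is correct, and it is essentially the argument the paper delegates to the citation of \cite[Theorem 23.6.1]{Hille_Phillips[57]}: the paper itself gives no details, so your write-up (differentiability at $t=1$ forces $T(1)u\in\domain(A)$, integrate the orbit of $T(1)u$, factor the bounded operator $T(1)$ out of the Bochner integral, cancel it by injectivity, and recover the difference quotient for $u$) supplies exactly the intended reasoning. No gaps: the use of the right-hand difference quotient to define the generator and the interchange of $T(1)$ with the integral are both standard and valid.
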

\begin{proof}
This is a straightforward generalization of \cite[proof of Theorem 23.6.1]{Hille_Phillips[57]} to general \cosegrs. 
\end{proof}

\noindent
Below, we apply this proposition to the semigroup of fractional powers of the integration operator as considered in \refeq{inteqop}. Those fractional powers in fact are the Abel integral operators considered in Example \ref{th:postype-examples},
$ T(\myalpha) = J_\myalpha $ for $ \myalpha > 0 $ and thus in particular $T(1) = J $. It turns out that the conditions of Proposition~\ref{th:semigroup-prop} are satisfied. We thus have
\begin{align}
\label{eq:abel_derivative}
\tfrac{d}{d\myalpha} \klaf{J_\myalpha u}_{\myalpha=1} = Su -c J u \for u \in \co,
\end{align}
where $ c = \Gamma^\prime(1) $ denotes Euler's constant, and 
\begin{align}
\label{eq:voltlog}
\klaf{Su}(x) = 
\ints{0}{x}{ \log(x-\myxi) u(\myxi) }{ d\myxi } \qquad (0 < x \le 1, \ u \in \co),
\end{align}
cf.~again \cite[Theorem 23.6.1]{Hille_Phillips[57]} for a similar setting in other spaces.
The kernel of the linear Volterra operator $ S: \co \to \co $
in \refeq{voltlog} is weakly singular in the sense of
\cite[Section 2.5]{Kress[14]}, and 
thus $ S $ is a compact operator, see \eg \cite[Theorem 2.29]{Kress[14]}.  Since 
$ \R(J) = \inset{u \in C^1[0,1] : u(0) = u^\prime(0) = 0 }$, it follows from
the identity \refeq{abel_derivative}
and Proposition \ref{th:semigroup-prop} that any $ u \in \co $ satisfies 
$ u \in \domain(\logj) $ if and only if both $ Su \in C^1[0,1] $ and $ (Su)^\prime(0) = 0 $ holds.

Below we show that the latter statements on the function $ Su $ indeed hold for the function $ u $ from
\refeq{ulog}. For this purpose, we consider
%
%
%
approximations of $ Su $ utilizing cutting functions
as follows, \cf \cite[Sections 3.3 and 4]{Vainikko[07]},
%
\begin{align*}
v_n(x) \defeq \ints{0}{x}{ e(n (x-y)) \log(x-\myxi) u(\myxi) }{ d\myxi } \forsh 0 < x \le 1, \
n \in \naza,
\end{align*}
where $ e \in C^1[0,\infty) $ is some cutting function satisfying $ e(r) = 0 $ for $ 0 \le r \le \tfrac{1}{2} $, $ e(r) = 1 $ for $ r \ge 1 $, and $ 0 \le e(r) \le 1 $ for $ \tfrac{1}{2} <  r < 1 $. 
It follows from the techniques considered in \cite[Sections 3.3 and 4]{Vainikko[07]}
that the functions $ v_n $ are continuous differentiable on the interval $ (0, 1] $, with
\begin{align*}
\prim{v}_n(x) = \ints{0}{x}{ e(n (x-y)) \log(x-\myxi) u^\prime(\myxi) }{ d\myxi } \forsh 0 < x \le 1, \ n \in \naza.
\end{align*}
Here we make use of the fact that the derivative $ \prim{u}(\myxi) =
\kappa \lfrac{(-\mylog c \myxi)^{-\kappa-1}}{\myxi} $ for $ 0 < \myxi \le 1 $ satisfies
$ \prim{u} \in C(0,1] \cap L^1(0,1) $.
Moreover, we have $ v_n \to Su $ and $ v_n^\prime \to w $ uniformly 
on each interval of the form $ [\varepsilon, 1] $, with $ 0 < \varepsilon \le 1 $ arbitrarily small,
where
\begin{align}
\label{eq:suprime}
w(x) \defeq \ints{0}{x}{ \log(x-\myxi) u^\prime(\myxi) }{ d\myxi } \forsh 0 < x \le 1.
\end{align}
%
%
Here we use the fact that the convolution kernel
$ k(\myxi) \defeq \log \myxi $ in \refeq{suprime}
is weakly singular.
Thus, $ Su \in C^1(0,1] $ and $ \prim{(Su)} = w $ on $ (0,1] $. This result follows also directly from the techniques utilized in
\cite[Lemma 3.3]{Vainikko[93]} or
\cite[Lemma 1]{Vainikko_Pedas[81]}.
Note that the integral \refeq{suprime} exists, which follows, e.g., from Fubini's theorem, \cf also \cite[Theorem~9.5.1]{Edwards[65]}.

Finally, elementary calculations show that $ w(x) \to 0 $ holds as $ x \to 0 $,
since $ \kappa > 1 $.
Thus the function $ Su $ is also differentiable at $ x = 0 $, with 
$ \klaf{Su}^\prime(0) = 0 $ as claimed.
%
%
%
%
%
\section{Conclusions and outlook}
In this work, we have considered the regularization of linear ill-posed problems $ Au = \fst $ with \postype operators $ A: \ix \to \ix $ in a Banach space $ \ix $.
Our focus is on the stable recovery of solutions $ \ust \in \ix $ that allow a logarithmic source representation $ \ust = A^\pp v \textup{ for some } v \in \domain(\logpowa) $, with some $ \pp \ge 0 $ and some integer $ \nu \ge 1 $.
Convergence rates for a class of parametric regularization schemes have been deduced that include the iterated version of Lavrentiev's method and the method of the abstract Cauchy problem. The presentation includes both a priori and a posteriori parameter choice strategies.

Further projects in this direction not considered in this paper are the implementation of numerical experiments, the consideration of iterative methods as regularization schemes, and, last but not least, smoothness conditions of the form
$ u^\dagger = \logpowa A^\pp v $.

\section{Appendix}
\label{appendix}
In the analysis of mixed order rates, the functions
\begin{align}
\mychi{q}{\pm\mu}(t) & = t^q \kla{\mylogb{t}}^{\pm\mu}, \quad 0 < t < 1
\qquad \kla{q \ge 0, \, \mu > 0},
\label{eq:chi-def}
\end{align}
are of major relevance. Below we state some elementary properties of those functions, cf.~also \cite{Hohage[00],Mair[94],Tautenhahn[98]}.
%
\begin{myenumerate_indent}
\item
\label{it:low_oder_prep_a}
The function $ \mychi{q}{-\mu} $ is strictly increasing on the interval $ \kla{0,1}  $, with $ \mychi{q}{-\mu}(t) \to 0 $ as $ t \to 0 $.


\item
\label{it:low_oder_prep_b}
For $ q > 0 $,
the inverse function $ \mychiinv{q}{-\mu}: (0,\infty) \to \reza $
satisfies
%
$ \mychiinv{q}{-\mu}(s) \sim
q^{-\mu/q} s^{1/q} \kla{\mylogb{s}}^{\mu/q} $ as $ s \to 0 $.
%
This implies in particular that, for each fixed $ 0 < s_0 < 1 $, we have
\begin{align*}
\mychiinv{q}{-\mu}(s) \asymp s^{\frac{1}{q}} \kla{\mylogb{s}}^{\frac{\mu}{q}}
= \mychi{\frac{1}{q}}{\frac{\mu}{q}}(s),
\quad 0 < s \le s_0.
\end{align*}

\item
\label{it:low_oder_prep_c}
For each constant $ \kappa > 0 $, we have
$ \mychi{q}{\pm\mu}(\kappa t) \sim \kappa^q \mychi{q}{\pm\mu}(t) $ as $ t \to 0 $,
and thus in particular
\begin{align*}
\mychi{q}{\pm\mu}(\kappa t) \asymp \mychi{q}{\pm\mu}(t), \quad 0 < t \le t_0,
\end{align*}
for each $ t_0 < \min\{1,1/\kappa\} $ fixed.

%

%
%

%
%
%
%
\end{myenumerate_indent}
Here, for two positive, real-valued functions $ f, g: (0,t_0) \to \reza $, the notation $ f(t) \sim g(t) $ as $ t \to 0 $ means $ f(t)/g(t) \to 1 $ as $ t \to 0 $. In addition, $  f(t) \asymp g(t) $ for $ t \in I \subset (0,t_0) $ means that there are finite positive   constants $ a_1, a_2 $ such that
$ a_1 f(t) \le g(t) \le a_2 f(t) $ for $ t \in I $.

\bigskip
\bigskip\noindent
\textbf{Acknowledgements.} The author is grateful to B.~Hofmann (Technical University Chemnitz) for the motivating impulses to pursue logarithms of operators in Banach spaces, which have made this work possible. 

This paper was supported by the German Research Foundation (DFG) under grant PL~182/8-1.
\bibliography{../datenbanken/illposed,../datenbanken/fracpower,../datenbanken/standard,../datenbanken/volterra}

\end{document}